%
%

\documentclass[a4paper,reqno]{amsart}


\PII{}
\makeatletter
\def\@setcopyright{\@empty}
\makeatother

\newtheorem{thm}{Theorem}
\newtheorem{lmm}{Lemma}

\theoremstyle{remark}
\newtheorem{rem}{Remark}
\newtheorem{exm}{Example}

\begin{document}

\title{On some transforms of trigonometric series}

\author{F.~M.\ Berisha}
\address{F.~M.\ Berisha\\
	Faculty of Mathematics and Sciences\\
	University of Prishtina\\
	N\"ena Terez\"e~5\\
	10000 Prishtin\"e\\
	Kosovo}
	\email{faton.berisha@uni-pr.edu}

\author{M.~H.\ Filipovi\'c}

\commby{Gradimir Milovanovi\'c}

\keywords{Accelerating convergence of series,
	Euler transform}
\subjclass{Primary 65B10, Secondary 42A32}
\date{}

\begin{abstract}
We give a transform of convergent trigonometric series
into  equivalent convergent series and sufficient
conditions for the transformed series to converge faster
than the original one.
\end{abstract}

\maketitle

\section{Introduction}

Let
\begin{equation}\label{eq:series-cos}
\sum_{n=1}^\infty a_n\cos (\alpha n+\beta)x
\end{equation}
be a convergent real or complex trigonometric series.
A method of accelerating the convergence
of~\eqref{eq:series-cos} is given in~\cite{sorokin:izv-84}.
It consists of as follows:

Let $r\ne 1$ be a real or complex number and $\Delta_r^k$
a linear operator defined by

\begin{align*}
\Delta_r(a_n)       &=a_{n+1}-ra_n\\
\Delta_r^{k+1}(a_n) &=\Delta_r(\Delta_r^k(a_n))
  \quad (k=1,2,\dotsc).
\end{align*}
If $\lim_{n\to \infty}\frac{a_{n+1}}{a_n}=r$,
$\lim_{n\to\infty}
  \frac{\Delta_r^k(a_{n+1})}{\Delta_r^k(a_n)}=r$
$(k=1,2,\dots,p)$,
then
\begin{multline}\label{eq:rem-mod-cos-a}
\sum_{n=1}^\infty a_n\cos(\alpha n+\beta)x
  =\frac{a_1C_r^1(0)}{1-2r\cos\alpha x+r^2}
		+\sum_{k=1}^{p-1}
		  \frac{\Delta_r^k(a_1)C_r^{k+1}(0)}
		    {(1-2r\cos\alpha x+r^2)^{k+1}}\\
+\frac1{(1-2r\cos\alpha x +r^2)^p}
		\sum_{n=1}^\infty\Delta_r^p(a_n)
		  \Delta_r^p\cos(\alpha n+\beta)x,
\end{multline}
where are
$C_r^k(n)=\Delta_r^k\cos(\alpha n+\beta)x$
$(n=0,1,2,\dotsc)$.

A generalisation for number series is given
in~\cite{milovanovic-k-c:izv-88} and for  power series
in~\cite{milovanovic-k-c-k:j-natur-89}.
More detailed aproach on these issues is given
in~\cite{filipovic:master-96}.
In this paper we obtain a generalisation  of
transform~\eqref{eq:rem-mod-cos-a} for cosine
and sine series and  give sufficient conditions for
the modified transform to converge faster
than~\eqref{eq:series-cos}.

For a sequence of real or complex numbers
$\{a_n\}_{n=1}^\infty$
and a given sequence $\{r_n\}_{n=1}^\infty$ we define
a linear operator $L_{r_1\dots r_p}$ by
\begin{align}\label{eq:L}
L_{r_1}(a_n)
  &=a_{n+1}-r_1a_n\notag\\
L_{r_1\dots r_{p+1}}(a_n)
  &=L_{r_1\dots r_p}(a_{n+1})-r_{p+1}L_{r_1\dots r_p}(a_n)
	\quad (p=1,2,\dotsc).
\end{align}
In particular, for the sequence
$\{\cos(\alpha n+\beta)x\}_{n=1}^\infty$ we put
\begin{displaymath}
C_{r_1\dots r_p}(n)
=L_{r_1\dots r_p}\cos(\alpha n+\beta)x
\quad (n=0,1,2,\dotsc).
\end{displaymath}

For fixed~$p$ put
\begin{gather*}
E_0=1, \quad E_1=\sum_{i=1}^p r_i,
  \quad E_2=\sum_{1\le i<j\le p}r_i r_j,\\
E_3=\sum_{1\le i<j<k\le p}r_i r_jr_k,
  \quad \dots,\quad E_p=r_1 r_2\dots r_p
\end{gather*}
(where the summation for~$E_m$ is performed over
all combinations of distinct indices between~$1$ and~$p$
taken~$m$ at a time); we note that
\begin{equation}\label{eq:Drj-a}
L_{r_1\dots r_p}(a_n)=\sum_{k=0}^p(-1)^k E_k a_{n+p-k}.
\end{equation}

In order to establish the modified transform,
we  use the following

\begin{lmm}
Suppose that the coefficients $t_{nm}\quad (0\le m\le n)$
of the infinite triangular matrix $(t_{nm})$ satisfy
the following conditions:
\begin{enumerate}
\item\label{it:toeplitz-1} $\lim_{n\to\infty}t_{nm}=0$
  for each fixed $m$;
\item\label{it:toeplitz-2} there exists a constant~$K$
  such that $\sum_{k=0}^p|t_{pk}|\le K$ for each
  nonnegative~$p$.
\end{enumerate}
Let $\{x_n\}_{n=1}^\infty$ be a sequence and define
the sequence $\{x_n'\}_{n=1}^\infty$ by
\begin{displaymath}
x_n'=\sum_{k=0}^n t_{nk}x_k \quad (n=0,1,2,\dotsc).
\end{displaymath}
Then we have: if $\lim_{n\to\infty}x_n=0$,
then $\lim_{n\to\infty}x_n'=0$.
\end{lmm}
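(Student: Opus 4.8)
The plan is to run the classical Toeplitz-type argument (this is just the null-sequence half of the Silverman–Toeplitz theorem), splitting the sum defining $x_n'$ into a fixed ``head'' and a uniformly controlled ``tail''. First I would fix $\varepsilon>0$ and use the hypothesis $\lim_{n\to\infty}x_n=0$ to choose an index $N$ with $|x_k|<\varepsilon$ for all $k>N$. For $n>N$ I then write
\begin{displaymath}
x_n'=\sum_{k=0}^{N}t_{nk}x_k+\sum_{k=N+1}^{n}t_{nk}x_k .
\end{displaymath}

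For the tail, condition~\eqref{it:toeplitz-2} yields $\bigl|\sum_{k=N+1}^{n}t_{nk}x_k\bigr|\le\varepsilon\sum_{k=N+1}^{n}|t_{nk}|\le\varepsilon K$, a bound uniform in $n$. For the head, $N$ is now frozen, so this is a sum of a fixed finite number $N+1$ of terms; since a convergent sequence is bounded, say $|x_k|\le M$ for all $k$, each summand satisfies $|t_{nk}x_k|\le M|t_{nk}|\to 0$ as $n\to\infty$ by condition~\eqref{it:toeplitz-1}, whence the head tends to $0$. Combining the two estimates gives $\limsup_{n\to\infty}|x_n'|\le\varepsilon K$; since $\varepsilon>0$ is arbitrary and $K$ is a fixed constant, $\lim_{n\to\infty}x_n'=0$.

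I do not anticipate a genuine obstacle here; the only point needing care is the order of the quantifiers — one must fix $N$ (and hence the head) \emph{before} passing to the limit $n\to\infty$, and it is exactly the uniform row-sum bound in condition~\eqref{it:toeplitz-2} that allows the tail estimate to survive that limit. The boundedness of $\{x_n\}$ used to dispose of the head is automatic from its convergence, so no extra hypothesis is needed.
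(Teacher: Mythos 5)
Your argument is correct and is precisely the classical head--tail splitting that the paper relies on: it gives no proof of its own but refers to Toeplitz's theorem in Fikhtengolts, whose standard proof is exactly the one you reproduce (tail controlled uniformly by the row-sum bound $K$, head of fixed length killed by condition~\ref{it:toeplitz-1}). Nothing is missing, and your remark about fixing $N$ before letting $n\to\infty$ is exactly the point where the uniform bound in condition~\ref{it:toeplitz-2} is needed.
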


The proof of the lemma is due to
Toeplitz \cite[p. 325]{fikhtengolts:kurs-1}.

\section{A modified transform of trigonometric series}

The following theorem gives a generalisation of
transform~\eqref{eq:rem-mod-cos-a}
for trigonometric series~\eqref{eq:series-cos}.

\begin{thm}\label{th:cr:rem-gen-cos}
Let~\eqref{eq:series-cos} be a convergent real or complex
trigonometric series $(\alpha\ne0)$,
$r_1,\dots,r_p$ $(r_j e^{\pm\alpha xi}\ne1,j=1,\dots,p)$
arbitrary real or complex numbers. Then
\begin{multline}\label{eq:rem-gen-cos-a}
\sum_{n=1}^\infty a_n\cos(\alpha n+\beta)x
  =\frac{a_1C_{r_1}(0)}{1-2r_1\cos \alpha x+r_1^2}\\
	  +\sum_{k=1}^{p-1}
	    \frac{L_{r_1...r_k}(a_1)C_{r_1...r_{k+1}}(0)}
		    {(1-2r_1\cos\alpha x+r_1^2)
		      \dots(1-2r_{k+1}\cos\alpha x+r_{k+1}^2)}\\
+\bigg(\prod_{j=1}^p(1-2r_j\cos\alpha x+r_j^2)\bigg)^{-1}
      \sum_{n=1}^\infty L_{r_1\dots r_p}(a_n)
		    L_{r_1\dots r_p}\cos(\alpha n+\beta)x.
\end{multline}
\end{thm}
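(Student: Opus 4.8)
The plan is to establish the identity by induction on $p$, peeling off one factor $1-2r_j\cos\alpha x+r_j^2$ at a time. The base case $p=1$ amounts to showing
\begin{equation*}
\sum_{n=1}^\infty a_n\cos(\alpha n+\beta)x
  =\frac{a_1C_{r_1}(0)}{1-2r_1\cos\alpha x+r_1^2}
   +\frac{1}{1-2r_1\cos\alpha x+r_1^2}
    \sum_{n=1}^\infty L_{r_1}(a_n)C_{r_1}(n),
\end{equation*}
which I would obtain by a summation-by-parts manoeuvre: multiply the original series by $1-2r_1\cos\alpha x+r_1^2$, and observe that this quantity is exactly the ``characteristic polynomial'' attached to the second-order recursion satisfied by $\cos(\alpha n+\beta)x$, namely $\cos(\alpha(n+1)+\beta)x-2\cos\alpha x\cos(\alpha n+\beta)x+\cos(\alpha(n-1)+\beta)x=0$. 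Writing $1-2r_1\cos\alpha x+r_1^2=(1-r_1e^{i\alpha x})(1-r_1e^{-i\alpha x})$ and splitting the cosine into exponentials reduces everything to the elementary geometric-type identity for $\sum a_n z^n$ with $z=e^{\pm i\alpha x}$; the hypothesis $r_je^{\pm\alpha xi}\ne1$ is precisely what keeps the denominators nonzero. Convergence of the new series $\sum L_{r_1}(a_n)C_{r_1}(n)$ follows because it is, up to the constant denominator, a finite linear combination of shifts of the original convergent series.

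For the inductive step I would apply the $p=1$ reduction again, but now to the tail series $\sum_{n=1}^\infty L_{r_1\dots r_p}(a_n)C_{r_1\dots r_p}(n)$ appearing in the depth-$p$ formula, using the parameter $r_{p+1}$. The key point is that $C_{r_1\dots r_p}(n)=L_{r_1\dots r_p}\cos(\alpha n+\beta)x$ still satisfies the same second-order recursion in $n$ (the operator $L_{r_1\dots r_p}$ commutes with the shift in $n$, so it preserves the annihilating relation), hence the same factorisation trick applies with $r_1$ replaced by $r_{p+1}$. This produces the new boundary term $L_{r_1\dots r_p}(a_1)C_{r_1\dots r_{p+1}}(0)$ divided by the extra factor $1-2r_{p+1}\cos\alpha x+r_{p+1}^2$, together with the tail $\sum L_{r_1\dots r_{p+1}}(a_n)C_{r_1\dots r_{p+1}}(n)$ divided by the full product — exactly matching the claimed depth-$(p+1)$ identity after reindexing the sum $\sum_{k=1}^{p-1}$.

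I expect two points to need care. First, one must verify that all the intermediate series converge so that rearranging and factoring out constant denominators is legitimate; this is handled by noting inductively that each $L_{r_1\dots r_k}(a_n)C_{r_1\dots r_k}(n)$ is a fixed finite linear combination (with coefficients the $(-1)^mE_m$ of~\eqref{eq:Drj-a}) of terms $a_{n+p-m}\cos(\alpha(n+p-m)+\beta)x$, each of which is a shifted tail of the convergent series~\eqref{eq:series-cos}. Second — and this is the main bookkeeping obstacle — one has to check that the boundary constants come out correctly: when the $p=1$ reduction is applied to the tail, the constant term that splits off involves $L_{r_1\dots r_p}(a_n)$ evaluated at the \emph{first} surviving index and $C_{r_1\dots r_{p+1}}$ evaluated at $0$, and matching these against the stated sum over $k$ requires tracking the shift conventions in the definition~\eqref{eq:L} of $L_{r_1\dots r_p}$ together with the convention $C_{r_1\dots r_p}(0)=L_{r_1\dots r_p}\cos(\beta)x$. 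Once the indexing is pinned down, the Toeplitz lemma is not actually needed for the identity itself (it will be needed for the later convergence-rate statement), so the proof of Theorem~\ref{th:cr:rem-gen-cos} is purely the induction just sketched.
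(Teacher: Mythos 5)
Your argument is correct and is essentially the paper's proof repackaged: both hinge on the factorisation $1-2r_j\cos\alpha x+r_j^2=(1-r_je^{\alpha xi})(1-r_je^{-\alpha xi})$ and the same one-step shift identity, the paper iterating it $p$ times on the two exponential halves $f_1(x)=\sum_{n=1}^\infty a_ne^{(\alpha n+\beta)xi}$ and $f_2(x)=f_1(-x)$ and recombining only once at the end via~\eqref{eq:Drj-a}, while you recombine at every step by inducting on $p$ in the cosine form, using that $C_{r_1\dots r_p}(n)$ still satisfies the two-term recurrence so the $p=1$ reduction can be reapplied to the tail. The only inaccuracy is in your convergence aside: $L_{r_1\dots r_k}(a_n)C_{r_1\dots r_k}(n)$ is a \emph{product} of two such linear combinations and hence contains cross terms $a_{n+i}\cos(\alpha(n+j)+\beta)x$ with $i\ne j$, which are not shifted tails of~\eqref{eq:series-cos} --- though the paper's own proof is no more careful here, since it tacitly assumes $f_1$ and $f_2$ converge separately.
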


\begin{proof}
Considering the Euler's formula for
$\cos(\alpha n+\beta)x$ we have
\begin{displaymath}
\sum_{n=1}^\infty a_n\cos(\alpha n+\beta)x
	=\frac12\sum_{n=1}^\infty a_ne^{(\alpha n +\beta)xi}
		+\frac12\sum_{n=1}^\infty a_ne^{-(\alpha n+\beta)xi}.
\end{displaymath}
Let $f_1(x)=\sum_{n=1}^\infty a_ne^{(\alpha n +\beta)xi}$,
$f_2(x)=\sum_{n=1}^\infty a_ne^{-(\alpha n+\beta)xi}$.
Then
\begin{displaymath}
f_1(x)=a_1e^{(\alpha+\beta)xi}+e^{\alpha xi}\Phi_1(x),
\end{displaymath}
where
$\Phi_1(x)=\sum_{k=1}^\infty a_{k+1}e^{(\alpha k+\beta)xi}$.
So
\begin{displaymath}
(1-r_1e^{\alpha xi})\Phi_1(x)
=\sum_{k=1}^\infty(a_{k+1}-r_1a_k)e^{(\alpha k+\beta)xi}
	+r_1a_1e^{(\alpha+\beta)xi}.
\end{displaymath}
Thus
\begin{displaymath}
\Phi_1(x)
=\frac{r_1a_1e^{(\alpha +\beta)xi}}{1-r_1e^{\alpha xi}}
	+\frac1{1-r_1e^{\alpha xi}}
	  \sum_{k=1}^\infty(a_{k+1}-r_1a_k)e^{(\alpha k+\beta)xi}
\end{displaymath}
and hence
\begin{multline}\label{eq:gen-cos-S-1}
f_1(x)=a_1e^{(\alpha+\beta)xi}
		+e^{\alpha xi}
		  \frac{r_1a_1e^{(\alpha+\beta)xi}}{1-r_1e^{\alpha xi}}
		+\frac{e^{\alpha xi}}{1-r_1e^{\alpha xi}}
			\sum_{k=1}^\infty(a_{k+1}-r_1a_k)e^{(\alpha k+\beta)xi}\\
=\frac{a_1e^{(\alpha +\beta)xi}}{1-r_1e^{\alpha xi}}
		+\frac{e^{\alpha xi}}{1-r_1e^{\alpha xi}}
      \sum_{k=1}^\infty L_{r_1}(a_k)e^{(\alpha k +\beta)xi}.
\end{multline}
Applying a similar technique to
$\sum_{k=1}^\infty L_{r_1}(a_k)e^{(\alpha k+\beta)xi}$,
we get
\begin{displaymath}
\sum_{k=1}^\infty L_{r_1}(a_k)e^{(\alpha k+\beta)xi}
=L_{r_1}(a_1)e^{(\alpha+\beta)xi}+e^{\alpha xi}\Phi_2(x)
\end{displaymath}
(where now
$\Phi_2(x)=\sum_{k=1}^\infty
  L_{r_1}(a_{k+1})e^{(\alpha k+\beta)xi}$),
where from using~\eqref{eq:L}, we get
\begin{multline*}
(1-r_2e^{\alpha xi})\Phi_2(x)
	=\sum_{k=1}^\infty(L_{r_1}(a_{k+1})-r_2L_{r_1}(a_k))
	    e^{(\alpha k+\beta)xi}
		+r_2L_{r_1}(a_1)e^{(\alpha+\beta)xi}\\
=\sum_{k=1}^\infty L_{r_1r_2}(a_k)e^{(\alpha k+\beta)xi}
		+r_2L_{r_1}(a_1)e^{(\alpha+\beta)xi}.
\end{multline*}
Hence
\begin{multline*}
\sum_{k=1}^\infty L_{r_1}(a_k)e^{(\alpha k+\beta)xi}
  =L_{r_1}(a_1)e^{(\alpha+\beta)xi}
		+e^{\alpha xi}
		  \frac{r_2e^{(\alpha+\beta)xi}L_{r_1}(a_1)}
		    {1-r_2e^{\alpha xi}}\\
+\frac{e^{\alpha xi}}{1-r_2e^{\alpha xi}}
		\sum_{k=1}^\infty L_{r_1r_2}(a_k)e^{(\alpha k+\beta)xi}.
\end{multline*}
Thus, using~\eqref{eq:gen-cos-S-1}, we obtain
\begin{multline}\label{eq:gen-cos-S-2}
f_1(x)=\frac{a_1e^{(\alpha+\beta)xi}}{1-r_1e^{\alpha xi}}
		+\frac{L_{r_1}(a_1)e^{(2\alpha+\beta)xi}}
		  {(1-r_1e^{\alpha xi})(1-r_2e^{\alpha xi})}\\
+\frac{e^{2\alpha xi}}
      {(1-r_1e^{\alpha xi})(1-r_2e^{\alpha xi})}
		\sum_{k=1}^\infty L_{r_1r_2}(a_k)e^{(\alpha k+\beta)xi}.
\end{multline}
Repeating this process~$p$ times we find that
\begin{equation}\label{eq:gen-cos-S-p}
f_1(x)=\frac{a_1e^{(\alpha+\beta)xi}}{1-r_1e^{\alpha xi}}
	+\sum_{k=1}^{p-1}L_{r_1\dots r_k}(a_1)
		\frac{e^{(\alpha(k+1)+\beta)xi}}
		  {(1-r_1e^{\alpha xi})\dots(1-r_{k+1}e^{\alpha xi})}
	+R_p^{(1)}(x),
\end{equation}
where
\begin{equation}\label{eq:gen-cos-Rp-a}
R_p^{(1)}(x)
=\frac1{(1-r_1e^{\alpha xi})\dots(1-r_pe^{\alpha xi})}
	\sum_{n=1}^\infty L_{r_1\dots r_p}(a_n)
	  e^{(\alpha(n+p)+\beta)xi}.
\end{equation}
Notice that~\eqref{eq:gen-cos-S-1} and~\eqref{eq:gen-cos-S-2}
are the $p=1$ and $p=2$ cases of~\eqref{eq:gen-cos-S-p},
respectively.
Since $f_2(x)=f_1(-x)$, we have
\begin{equation*}\tag{\ref{eq:gen-cos-S-p}$'$}
f_2(x)=\frac{a_1e^{-(\alpha+\beta)xi}}{1-r_1e^{-\alpha xi}}
  +\sum_{k=1}^{p-1}L_{r_1\dots r_k}(a_1)
    \frac{e^{-(\alpha(k+1)+\beta)xi}}
      {(1-r_1e^{-\alpha xi})\dots(1-r_{k+1}e^{-\alpha xi})}
  +R_p^{(2)}(x),
\end{equation*}
where
\begin{equation*}\tag{\ref{eq:gen-cos-Rp-a}$'$}
R_p^{(2)}(x)
=\frac1{(1-r_1e^{-\alpha xi})\dots(1-r_pe^{-\alpha xi})}
	\sum_{n=1}^\infty L_{r_1\dots r_p}(a_n)
	  e^{-(\alpha(n+p)+\beta)xi}.
\end{equation*}
Multiplying the equations~\eqref{eq:gen-cos-S-p}
and~\thetag{\ref{eq:gen-cos-S-p}$'$} by $\frac12$ and
then summing the two together, using~\eqref{eq:Drj-a}
for the sequence
$\{\cos(\alpha n+\beta)x\}_{n=1}^\infty$, we obtain
\begin{multline*}\tag{\ref{eq:gen-cos-S-p}$''$}
\sum_{n=1}^\infty a_n\cos(\alpha n+\beta)x
  =\frac{a_1C_{r_1}(0)}{1-2r_1\cos\alpha x+r_1^2}\\
+\sum_{k=1}^{p-1}
      \frac{L_{r_1\dots r_k}(a_1)C_{r_1\dots r_{k+1}}(0)}
	      {(1-2r_1\cos\alpha x+r_1^2)
	        \dots(1-2r_{k+1}\cos\alpha x+r_{k+1}^2)}
	  +R_p(x),
\end{multline*}
where
\begin{multline*}\tag{\ref{eq:gen-cos-Rp-a}$''$}
R_p(x)=\frac12\left(R_p^{(1)}(x)+R_p^{(2)}(x)\right)\\
=\bigg(\prod_{j=1}^p(1-2r_j\cos\alpha x+r_j^2)\bigg)^{-1}
		\sum_{n=1}^\infty L_{r_1\dots r_p}(a_n)
		  L_{r_1\dots r_p}\cos(\alpha n+\beta)x.
\end{multline*}
Equalities~\thetag{\ref{eq:gen-cos-S-p}$''$}
and~\thetag{\ref{eq:gen-cos-Rp-a}$''$} complete the proof.
\end{proof}

In completely analogous way we obtain the similar transform
for sine series
\begin{multline}\label{eq:rem-gen-sin}
\sum_{n=1}^\infty a_n\sin(\alpha n+\beta)x
  =\frac{a_1S_{r_1}(0)}{1-2r_1\cos\alpha x+r_1^2}\\
+\sum_{k=1}^{p-1}
      \frac{L_{r_1\dots r_k}(a_1)S_{r_1\dots r_{k+1}}(0)}
		    {(1-2r_1\cos\alpha x+r_1^2)
		      \dots(1-2r_{k+1}\cos\alpha x+r_{k+1}^2)}\\
+\bigg(\prod_{j=1}^p(1-2r_j\cos\alpha x+r_j^2)\bigg)^{-1}
    \sum_{n=1}^\infty L_{r_1\dots r_p}(a_n)
      L_{r_1\dots r_p}\sin(\alpha n+\beta)x,
\end{multline}
where
$S_{r_1\dots r_p}(n)=L_{r_1\dots r_p}\sin(\alpha n+\beta)x$
$(n=0,1,2,\dotsc)$.

\begin{rem}
If $L_{r_1\dots r_p}(a_n)=0$ for some $p\ge1$ and
for~$n$ sufficiently large,
then~\eqref{eq:rem-gen-cos-a} and~\eqref{eq:rem-gen-sin}
transform trigonometric series into finite sums.
\end{rem}

\begin{rem}
In particular, for $r_1=r_2=\dots=r_p=r$ we obtain the
transform~\eqref{eq:rem-mod-cos-a}.
\end{rem}

\section{Accelerating convergence of trigonometric series}

The following theorem gives a transform of convergent
trigonometric series~\eqref{eq:series-cos} into
an equivalent convergent series.

\begin{thm}\label{th:cr:euler-gen-cos}
Let~\eqref{eq:series-cos} be a convergent series on~$x$,
$\pi/2\le|\alpha|x\le3\pi/2$ and $\{r_n\}_{n=1}^\infty$
a sequence of positive real numbers such that for some
real $\lambda>1$
\begin{equation}\label{eq:gen-cos-rn}
r_n=O\left(n^{-\lambda}\right) \quad\text{or}
\quad 1/r_n=O\left(n^{-\lambda}\right).
\end{equation}
Then
\begin{multline}\label{eq:euler-gen-cos-a}
\sum_{n=1}^\infty a_n\cos(\alpha n+\beta)x
  =\frac{a_1C_{r_1}(0)}{1-2r_1\cos\alpha x+r_1^2}\\
+\sum_{k=1}^\infty
      \frac{L_{r_1\dots r_k}(a_1)C_{r_1\dots r_{k+1}}(0)}
		    {(1-2r_1\cos\alpha x+r_1^2)
		      \dots(1-2r_{k+1}\cos\alpha x+r_{k+1}^2)}.
\end{multline}
\end{thm}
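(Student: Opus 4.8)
The plan is to obtain \eqref{eq:euler-gen-cos-a} as the limit, as $p\to\infty$, of the finite transform \eqref{eq:rem-gen-cos-a} supplied by Theorem~\ref{th:cr:rem-gen-cos}. So the first step is to fix $x$ with $\pi/2\le|\alpha|x\le 3\pi/2$, observe that the hypothesis \eqref{eq:gen-cos-rn} together with $\lambda>1$ guarantees (after discarding finitely many terms if necessary) that $r_je^{\pm\alpha xi}\ne 1$ for all $j$, so \eqref{eq:rem-gen-cos-a} is valid for every $p$, and then write
\begin{displaymath}
\sum_{n=1}^\infty a_n\cos(\alpha n+\beta)x
  =\frac{a_1C_{r_1}(0)}{1-2r_1\cos\alpha x+r_1^2}
   +\sum_{k=1}^{p-1}
      \frac{L_{r_1\dots r_k}(a_1)C_{r_1\dots r_{k+1}}(0)}
        {(1-2r_1\cos\alpha x+r_1^2)\dots(1-2r_{k+1}\cos\alpha x+r_{k+1}^2)}
   +R_p(x),
\end{displaymath}
with $R_p(x)$ as in \thetag{\ref{eq:gen-cos-Rp-a}$''$}. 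It then suffices to prove two things: (i) the series on the right of \eqref{eq:euler-gen-cos-a} converges, and (ii) $R_p(x)\to 0$ as $p\to\infty$.

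For (ii) it is cleanest to work with $R_p^{(1)}(x)$ from \eqref{eq:gen-cos-Rp-a} (and symmetrically $R_p^{(2)}(x)$). The denominator $\prod_{j=1}^p(1-r_je^{\alpha xi})$ is, under \eqref{eq:gen-cos-rn}, bounded away from $0$ and $\infty$ uniformly in $p$: indeed $\sum_j|r_j|<\infty$ in the first alternative (so $\prod(1-r_je^{\alpha xi})$ converges to a nonzero limit), while in the second alternative one factors $1-r_je^{\alpha xi}=-r_je^{\alpha xi}(1-r_j^{-1}e^{-\alpha xi})$ and uses $\sum_j r_j^{-1}<\infty$ together with the explicit factor $\prod(-r_je^{\alpha xi})$, which combines with the geometric-type growth of $L_{r_1\dots r_p}(a_n)=\sum_{k=0}^p(-1)^kE_ka_{n+p-k}$ from \eqref{eq:Drj-a}. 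The remaining point is that $\sum_{n=1}^\infty L_{r_1\dots r_p}(a_n)e^{(\alpha(n+p)+\beta)xi}\to 0$: this is exactly where the Toeplitz Lemma enters. One sets up an infinite triangular matrix whose $p$-th row encodes the coefficients $E_0,\dots,E_p$ (suitably normalised) acting on the partial sums or tails of the convergent series \eqref{eq:series-cos}; condition~\eqref{it:toeplitz-1} follows because each $E_k$, for fixed $k$, tends to $0$ as $p\to\infty$ (being a sum of products of the small $r_j$'s), and condition~\eqref{it:toeplitz-2} follows from $\sum_{k=0}^p|E_k|=\prod_{j=1}^p(1+|r_j|)\le\exp\bigl(\sum_j|r_j|\bigr)<\infty$ (first alternative; mutatis mutandis in the second). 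Summability of the original series is what makes the relevant sequence $x_n\to 0$, and the Lemma yields $x_n'\to 0$, i.e.\ $R_p(x)\to 0$.

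Once $R_p(x)\to 0$ is established, (i) comes for free: the partial sums of the series in \eqref{eq:euler-gen-cos-a} are precisely
\[
\sum_{n=1}^\infty a_n\cos(\alpha n+\beta)x - R_p(x) - \frac{a_1C_{r_1}(0)}{1-2r_1\cos\alpha x+r_1^2}+\frac{a_1C_{r_1}(0)}{1-2r_1\cos\alpha x+r_1^2},
\]
so they converge to $\sum_{n=1}^\infty a_n\cos(\alpha n+\beta)x$, which is both the convergence assertion and the claimed identity. I expect the main obstacle to be step (ii) — specifically, packaging the operator $L_{r_1\dots r_p}$ applied to the (partial sums of the) series as a genuine Toeplitz matrix acting on a null sequence, and checking the uniform bound \eqref{it:toeplitz-2} and the control of the denominator $\prod_{j=1}^p(1-r_je^{\pm\alpha xi})$ simultaneously in the two cases of \eqref{eq:gen-cos-rn}; the restriction $\pi/2\le|\alpha|x\le 3\pi/2$ presumably is used precisely to keep $|1-r_je^{\pm\alpha xi}|$ away from $0$ when $r_j$ is near $1$ during the finitely many exceptional indices, and this interplay is the delicate bookkeeping in the argument.
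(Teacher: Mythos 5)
Your overall architecture is the same as the paper's: fix $p$, invoke Theorem~\ref{th:cr:rem-gen-cos}, and show $R_p(x)\to0$ by using \eqref{eq:Drj-a} to rewrite $R_p^{(1)}(x)$ as a triangular transform of the tails $r_k(x)$ of $\sum_{n}a_ne^{(\alpha n+\beta)xi}$ and then invoking the Toeplitz lemma (condition (i) of your plan then comes for free, exactly as you say). The gap is in your verification of condition~\eqref{it:toeplitz-1}, which is the crux. After the regrouping, the entry of the $p$-th row multiplying the \emph{fixed} tail $r_k(x)$ is, up to a unimodular factor, $E_{p-k}\big/\prod_{j=1}^p(1-r_je^{\alpha xi})$ --- the elementary symmetric function of order $p-k$, not of fixed order. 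So the claim that ``each $E_k$, for fixed $k$, tends to $0$'' is both misdirected and false: $E_0=1$ identically; in the first alternative $E_1=\sum_{i\le p}r_i$ tends to $\sum_{i=1}^\infty r_i\ne0$; and in the second alternative the $r_j$ are not small at all. What actually makes $t_{pk}\to0$ is the pairing of each factor $r_{i_j}$ occurring in $E_{p-k}$ with one factor of the denominator, giving
\[
|t_{pk}|\le\sum_{1\le i_1<\dots<i_{p-k}\le p}\ \prod_{j=1}^{p-k}\Bigl|\frac{r_{i_j}}{1-r_{i_j}e^{\alpha xi}}\Bigr|\le\binom{p}{p-k}M^{p-k},\qquad M=\max_{1\le j\le p}\Bigl|\frac{r_j}{1-r_je^{\alpha xi}}\Bigr|<1 .
\]

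This is also where the hypothesis $\pi/2\le|\alpha|x\le3\pi/2$ really enters, and your closing guess about it is off the mark. That restriction gives $\cos\alpha x\le0$, hence $|1-r_je^{\alpha xi}|^2=1-2r_j\cos\alpha x+r_j^2\ge1+r_j^2$, so $|1-r_je^{\alpha xi}|>1$ and $|r_j/(1-r_je^{\alpha xi})|<1$ for \emph{every} index $j$ and every size of $r_j$ --- it is not a safeguard against finitely many $r_j$ near $1$. The same inequality is what rescues condition~\eqref{it:toeplitz-2} in the second alternative of \eqref{eq:gen-cos-rn}: there $\prod_j(1+r_j)$ diverges, and your bound $\sum_k|E_k|=\prod_j(1+r_j)\le\exp(\sum_j r_j)$ simply does not apply, but $\prod_j(1+r_j)\big/\prod_j|1-r_je^{\alpha xi}|\le\prod_j(1+1/r_j)<\infty$ does. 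Your instinct to factor $1-r_je^{\alpha xi}=-r_je^{\alpha xi}(1-r_j^{-1}e^{-\alpha xi})$ is the right move for that case, but without the pointwise bound $|r_j/(1-r_je^{\alpha xi})|<1$ the step you yourself single out as the obstacle --- condition~\eqref{it:toeplitz-1} --- does not go through.
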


\begin{proof}
In order to prove~\eqref{eq:euler-gen-cos-a} we need
to show that in~\thetag{\ref{eq:gen-cos-S-p}$''$},
$\lim_{p\to\infty}R_p(x)=0$.
Let $r_n(x)$ be the remainder of the convergent series
$\sum_{n=1}^\infty a_n e^{(\alpha n+\beta)xi}$.
Then $\lim_{n\to\infty}r_n(x)=0$.
Thus, using~\eqref{eq:Drj-a} we get
\begin{multline*}
R_p^{(1)}(x)
=\bigg(\prod_{j=1}^p(1-r_je^{\alpha xi})\bigg)^{-1}
	\sum_{n=1}^\infty e^{(\alpha(n+p)+\beta)xi}
	  \sum_{k=0}^p(-1)^kE_ka_{n+p-k}\\
=\bigg(\prod_{j=1}^p(1-r_je^{\alpha xi})\bigg)^{-1}
  \sum_{k=0}^p(-1)^{p-k}e^{(p-k)xi}E_{p-k}r_k(x).
\end{multline*}
In the lemma of Introduction we put
\begin{displaymath}
t_{pk}=\bigg(\prod_{j=1}^p(1-r_je^{\alpha xi})\bigg)^{-1}
  (-1)^{p-k}e^{(p-k)xi}E_{p-k}
\quad(0\le k\le p).
\end{displaymath}
We have to show that the conditions of the lemma are
satisfied. For each nonnegative integer~$p$ we have
\begin{multline*}
|t_{pk}|=\bigg|
    \bigg(\prod_{j=1}^p(1-r_je^{\alpha xi})\bigg)^{-1}E_{p-k}
  \bigg|\\
=\bigg|
  \sum_{1\le i_1<\dots<i_{p-k}\le p}
    \bigg(\prod_{s=p-k+1}^p(1-r_{i_s}e^{\alpha xi})\bigg)^{-1}
		\prod_{j=1}^{p-k}\frac{r_{i_j}}{1-r_{i_j}e^{\alpha xi}}
	\bigg|.
\end{multline*}
Without lost in generality we may assume that $\alpha>0$.
Then $\pi/2\le\alpha x\le3\pi/2$,
and since $r_j\ge 0$ $(j=1,\dots,p)$, we have
\begin{equation}\label{eq:gen-cos-rj}
\left|1-r_je^{\alpha xi}\right|>1,
\quad
\left|r_j/
  \left(1-r_je^{\alpha xi}\right)
\right|<1
\quad(j=1,\dots,p).
\end{equation}
Hence
\begin{displaymath}
|t_{pk}|
\le\sum_{1\le i_1<\dots<i_{p-k}\le p}
	\prod_{j=1}^{p-k}
	  \left|\frac{r_{i_j}}{1-r_{i_j}e^{\alpha xi}}\right|
\le\binom p{p-k}M^{p-k}\le p^kM^{p-k},
\end{displaymath}
where
$M=\max_{1\le j\le p}
  \left|r_j/\left(1-r_je^{\alpha xi}\right)\right|
<1$.
Thus $\lim_{p\to\infty}t_{pk}=0$,
so the condition~\ref{it:toeplitz-1} of the lemma holds.
Notice that
\begin{equation}\label{eq:sumtpk}
\sum_{k=0}^p|t_{pk}|
\le\bigg|\prod_{j=1}^p(1-r_je^{\alpha xi})\bigg|^{-1}
	\sum_{k=0}^pE_{p-k}
=\bigg|\prod_{j=1}^p(1-r_je^{\alpha xi})\bigg|^{-1}
  \prod_{k=1}^p(1+r_k).
\end{equation}
From~\eqref{eq:gen-cos-rn} we derive that one of the two series
$\sum_{j=1}^\infty r_j$ or $\sum_{j=1}^\infty1/r_j$ converges,
and hence one of the two infinite products
$\prod_{j=1}^\infty(1+r_j)$ or $\prod_{j=1}^\infty(1+1/r_j)$
converges. Put
\begin{displaymath}
K=
\begin{cases}
\prod_{j=1}^\infty(1+r_j),
  &\text{if $\prod_{j=1}^\infty(1+r_j)$ converges}\\
\prod_{j=1}^\infty(1+1/r_j),
  &\text{if $\prod_{j=1}^\infty(1+1/r_j)$ converges},
\end{cases}
\end{displaymath}
using~\eqref{eq:sumtpk} and~\eqref{eq:gen-cos-rj},
we conclude that the condition~\ref{it:toeplitz-2} of
the lemma is also satisfied.
Thus $\lim_{p\to\infty}R_p^{(1)}(x)=0$.
On the other hand, since
\begin{displaymath}
\left|1-r_je^{\alpha xi}\right|
=\left|1-r_je^{-\alpha xi}\right|
\quad (j=1,2,\dotsc),
\end{displaymath}
the inequalities~\eqref{eq:gen-cos-rj} hold true if we
replace~$x$ by~$-x$.
This however, based on~\eqref{eq:gen-cos-Rp-a}
and~\thetag{\ref{eq:gen-cos-Rp-a}$'$}, means that
$\lim_{p\to\infty}R_p^{(2)}(x)=0$.
Hence, using~\thetag{\ref{eq:gen-cos-Rp-a}$''$},
$\lim_{p\to\infty}R_p(x)=0$.
Now~\eqref{eq:euler-gen-cos-a} follows
from~\thetag{\ref{eq:gen-cos-S-p}$''$}.
\end{proof}

Obviously, the result of Theorem~\ref{th:cr:euler-gen-cos}
can be applied for transform~\eqref{eq:rem-gen-sin}
of sine series.
The obtained transform,  analogous
with~\eqref{eq:euler-gen-cos-a}, is
\begin{multline}\label{eq:euler-gen-sin-a}
\sum_{n=1}^\infty a_n\sin(\alpha n+\beta)x
  =\frac{a_1S_{r_1}(0)}{1-2r_1\cos\alpha x+r_1^2}\\
+\sum_{k=1}^\infty
    \frac{L_{r_1\dots r_k}(a_1)S_{r_1...r_{k+1}}(0)}
		  {(1-2r_1\cos\alpha x+r_1^2)
		    \dots(1-2r_{k+1}\cos\alpha x+r_{k+1}^2)}.
\end{multline}

\begin{rem}\label{rm:gen-cos-condition}
Let $r_j>0$ $(j=1,2,\dotsc)$ and
$\pi/2\le|\alpha|x\le3\pi/2$.
Suppose for $p=1,2,\dotsc$ that
$L_{r_1\dots r_p}(a_n)\ne0$ for~$n$ sufficiently large
and that
$\lim_{n\to\infty}
  \frac{L_{r_1\dots r_p}(a_{n+1})}
    {L_{r_1\dots r_p}(a_n)}$
exists.
Since, according to~\eqref{eq:L},
\begin{equation}\label{eq:gen-num-frac}
\begin{aligned}
\frac{L_{r_1}(a_n)}{a_n}
  &=\frac{a_{n+1}}{a_n}-r_1\\
\frac{L_{r_1\dots r_{p+1}}(a_n)}{L_{r_1\dots r_p}(a_n)}
	&=\frac{L_{r_1\dots r_p}(a_{n+1})}
	    {L_{r_1\dots r_p}(a_n)}
	   -r_{p+1}
	\quad (p=1,2,\dotsc),
\end{aligned}
\end{equation}
if we require the additional condition that
the sequence $\{r_n\}_{n=1}^\infty$ is chosen so that
\begin{displaymath}
r_1=\lim_{n\to\infty}\frac{a_{n+1}}{a_n},
\quad
r_{p+1}=\lim_{n\to\infty}
  \frac{L_{r_1\dots r_p}(a_{n+1})}{L_{r_1\dots r_p}(a_n)}
\quad (p=1,2,\dotsc),
\end{displaymath}
then the sequences~\eqref{eq:gen-num-frac} are null-sequences.
Whence
\begin{displaymath}
\lim_{n\to\infty}\frac{L_{r_1...r_p}(a_n)}{a_n}
=\lim_{n\to\infty}
  \frac{L_{r_1}(a_n)}{a_n}
  \frac{L_{r_1r_2}(a_n)}{L_{r_1}(a_n)}
	\dots\frac{L_{r_1\dots r_p}(a_n)}{L_{r_1\dots r_{p-1}}(a_n)}
=0
\quad (p=1,2,\dotsc).
\end{displaymath}
Therefore, according to~\thetag{\ref{eq:gen-cos-S-p}$''$},
\thetag{\ref{eq:gen-cos-Rp-a}$''$}, \eqref{eq:gen-cos-Rp-a},
\thetag{\ref{eq:gen-cos-Rp-a}$'$} and~\eqref{eq:gen-cos-rj},
we conclude that the series on the right-hand side
of~\eqref{eq:rem-gen-cos-a} and~\eqref{eq:rem-gen-sin}
converge faster than the ones on the left-hand side.
\end{rem}

If we require that the sequence $\{r_n\}_{n=1}^\infty$
from Remark~\ref{rm:gen-cos-condition}
satisfies the condition~\eqref{eq:gen-cos-rn},
then the right-hand sides of~\eqref{eq:euler-gen-cos-a}
and~\eqref{eq:euler-gen-sin-a} converge faster than the
left-hand sides.

\begin{exm}
Let $a_n=1/(a^n+b^n)$ $(0<a<b)$.
Then
\begin{displaymath}
r_1=\lim_{n\to\infty}\frac{a_{n+1}}{a_n}=\frac1b,
\quad
r_{p+1}=\lim_{n\to\infty}
    \frac{L_{r_1\dots r_p}(a_{n+1})}{L_{r_1\dots r_p}(a_n)}
	=\frac{a^p}{b^{p+1}}
	\quad (p=1,2,\dotsc).
\end{displaymath}
Obviously, the sequence $\{r_n\}_{n=1}^\infty$ satisfies
the condition~\eqref{eq:gen-cos-rn} of
Theorem~\ref{th:cr:euler-gen-cos}.
In particular, put $a=2$, $b=3$, $\alpha=1$, $\beta=0$,
$x=3\pi/4$; then in order to calculate
the approximate sum of the number series
$\sum_{n=1}^\infty\frac1{2^n+3^n}\cos\frac{3n\pi}4$
with an error not greater than~$10^{-6}$
we must compute the sum of first~$12$ terms.
Applying the transform~\eqref{eq:rem-gen-cos-a} of
Theorem~\ref{th:cr:rem-gen-cos}, the same accuracy is
obtained by computing the sum of first~$7$ terms for $p=1$,
$4$~terms for $p=2$, and $2$~terms for $p=3$.
\end{exm}

\bibliographystyle{plain}
\bibliography{maths}

\end{document}